\newtheorem{theorem}{Theorem}
\newtheorem{fact*}{Fact*}
\newcommand{\ol}{\overline}
\newcommand{\cT}{\mathcal{T}}
\newcommand{\oP}{\operatorname{Pr}}
\newcommand{\op}{\omega^{\prime}}
\newcommand{\cG}{\mathcal{G}}
\newcommand{\cE}{\mathcal{E}}
\begin{document}

\parindent = 0cm
\parskip = .3cm

\title[Property O]{The minimum Number of Edges\\in Uniform Hypergraphs with Property O}

\author[D. Duffus, B. Kay, V. R\"odl]{Dwight Duffus, Bill Kay and  Vojt\v{e}ch R\"{o}dl$^1$}
\address {Mathematics \& Computer Science\\
   Emory University, Atlanta, GA  30322 USA}
   \email[Dwight Duffus]{dwight@mathcs.emory.edu}
   \email[Bill Kay]{bill.w.kay@gmail.com}
   \email[ Vojt\v{e}ch R\"{o}dl]{rodl@mathcs.emory.edu}

\keywords{oriented $k$-uniform hypergraph}

\subjclass[2010]{05C65, 05C35, 05D40}

\footnotetext[1]{V. R\"odl was supported by NSF grant DMS 1301698}

\begin{abstract}
An oriented $k$-uniform hypergraph (a family of ordered $k$-sets) has the ordering property (or Property O) if for every 
linear order of the vertex set, there is some edge oriented consistently with the linear order.  We find bounds on the minimum 
number of edges in a hypergraph with Property O.
\end{abstract}

\date{10 August 2016}
\maketitle
\thispagestyle{empty}

\section{Introduction}\label{S:intro}

This note is motivated by two types of problems concerning hypergraphs.  The first is well-known and regards
2-colorable hypergraphs, also said to possess Property B.  Several papers have presented bounds on $m(k)$,
the minimum number of edges in a $k$-uniform hypergraph that does not have Property B (see
\cite{B}, \cite{CK}, \cite{RS} and \cite{S}).  The second comes from Ramsey theory, where appropriate properties
of graphs containing a given graph with a fixed order can be used to prove negative partition relations for unordered graphs (see \cite {NR} and \cite{NR2} for early papers on this topic). 

We would like to determine the minimum number of edges in a oriented uniform hypergraph needed to ensure that for
every ordering of the vertex set, some edge is ordered in the same way.  Here are the required definitions 
followed by our results and a conjecture.

Fix a positive integer $k \ge 2$ and a finite set $V$.  An {\it ordered $k$-set} $\overline{E}$ is a $k$-tuple $(x_1, x_2, \ldots, x_k )$ 
of distinct elements of $V$; we use $E$ to denote the unordered set $\{x_1, x_2, \ldots, x_k\}$.  Given a family of ordered $k$-sets 
$\mathcal{E}\subseteq V^k$ with no two $k$-tuples on the same $k$-element set, call $\mathcal{H} = (V, \mathcal{E})$  an 
{\em oriented $k$-uniform hypergraph}, or, more briefly, an {\it oriented $k$-graph}.  In the case that $\mathcal{E}$ contains 
an ordered $k$-set for each $k$-element subset of $V$, call $\mathcal{H}$ 
a {\it $k$-tournament}.   So, a $k$-tournament is obtained from the complete $k$-uniform hypergraph $K_n^{(k)}$ by giving each 
$k$-set an orientation.   For $E \subseteq V$ and a linear order $<$ on $V$, an ordered $k$-set $\ol{E} = (x_1, x_2, \ldots x_k)$
is {\it consistent} with $<$ if $x_1 < x_2< \ldots < x_k$. 

Here is the property that interests us.  

  {\bf Definition.}
     Given an oriented  $k$-graph $\mathcal{H} = (V, \mathcal{E})$ we say that that $\mathcal{H}$ 
     has the {\it ordering property}, or {\em Property O}, if for every (linear) order $<$ of $V$ there exists $\ol E \in \mathcal{E}$ that is consistent with $<$.   For an 
     integer $k \ge 2$, let \\[-.2cm]
     
 \centerline{$f(k)$ be the minimum number of edges in an oriented $k$-graph with Property O.}

Here is what we know about bounds for $f(k)$.\\

  \begin{theorem}\label{T:ub}
     The function $f(k)$ satisfies $k! \le f(k) \le (k^{2} \ln k) k!$ where the lower bound holds for all $k$ and the upper bound for $k \ge k_0$.
  \end{theorem}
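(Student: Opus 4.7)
The plan is a single-sample probabilistic argument. Let $\mathcal{H} = (V, \mathcal{E})$ have Property O, and sample a uniformly random linear order $<$ on $V$. For any edge $\overline{E} = (x_1, \ldots, x_k) \in \mathcal{E}$, the induced order of $<$ on $\{x_1, \ldots, x_k\}$ is uniformly distributed over the $k!$ linear orders of this $k$-set, so the probability that $\overline{E}$ is consistent with $<$ is exactly $1/k!$. By linearity of expectation, the expected number of edges consistent with $<$ is $|\mathcal{E}|/k!$. Property O forces this count to be at least $1$ for every $<$, so the expectation is $\ge 1$, giving $|\mathcal{E}| \ge k!$.

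\textbf{Upper bound.} I would construct $\mathcal{H}$ by random sampling and close with a union bound. Put $V = [n]$ with $n = \lceil k^2/2 \rceil$, set $m = \lfloor (k^2 \ln k)\, k! \rfloor$, draw $m$ distinct $k$-subsets of $V$ uniformly at random, and equip each with an independent uniformly random orientation. For any fixed linear order $<$ on $V$ and any selected subset, the assigned orientation is consistent with $<$ with probability $1/k!$, independently across selected subsets, so
\[
\Pr[\text{no edge of } \mathcal{H} \text{ is consistent with } <] \le \bigl(1 - 1/k!\bigr)^m \le e^{-m/k!}.
\]
A union bound over the $n!$ linear orders gives $\Pr[\mathcal{H} \text{ fails Property O}] \le n! \, e^{-m/k!}$, which is strictly less than $1$ whenever $m > k! \ln(n!)$. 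A Stirling estimate shows $\ln(n!) \le k^2 \ln k - \Theta(k^2)$ for $n = \lceil k^2/2 \rceil$, so the chosen $m$ satisfies the inequality for all $k \ge k_0$, and hence some realization yields an $\mathcal{H}$ with Property O and at most $m \le (k^2 \ln k)\, k!$ edges.

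\textbf{Main obstacle.} The substantive delicacy is calibrating $n$ to satisfy two competing constraints: $n$ must be large enough that $\binom{n}{k} \ge m$, so that $m$ distinct $k$-subsets actually exist to be sampled, but small enough that $\ln(n!) \le k^2 \ln k$, so that the union bound closes against the target $m$. A Stirling computation confirms that any $n = \Theta(k^2)$ works: with $n = \lceil k^2/2 \rceil$ one has $\binom{n}{k}/m = (e^2/2)^{k}/\mathrm{poly}(k)$, which easily dominates $m$ for large $k$, while $\ln(n!) = (1+o(1))\,k^2 \ln k$ by construction. If one wished to avoid sampling exactly $m$ distinct subsets, an alternative is to include each $k$-subset independently with probability $p = m/\binom{n}{k}$ and orient it at random, then apply standard concentration to the edge count; the bound obtained is the same up to lower-order terms.
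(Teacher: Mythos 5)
Your proof is correct, and its skeleton is the same as the paper's: the lower bound is the paper's double-counting argument rephrased as an expectation (each edge is consistent with exactly $n!/k!$ of the $n!$ orders, forcing $|\mathcal{E}| \ge k!$), and the upper bound is the same first-moment computation --- independent uniform orientations, probability $1/k!$ per edge of hitting a fixed order, union bound over $n!$ orders. Where you genuinely diverge is in the parameterization, and the comparison is instructive. The paper takes \emph{every} $k$-subset of $[n]$ as an edge (a random $k$-tournament), so the edge count is $\binom{n}{k}$ and $n = (k/e)^2\left(\pi e^{e^2/2} k^3 \ln k\right)^{1/k}$ must be calibrated delicately enough that $\binom{n}{k}/k! \le k^2\ln k$ and $n\ln n < \binom{n}{k}/k!$ hold simultaneously; this requires the paper's Fact that $\binom{n}{k} = e^{-e^2/2}\, n^k/k!\,(1+o(1))$ plus a Stirling computation. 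You decouple the two constraints by fixing $m = \lfloor (k^2\ln k)\,k!\rfloor$ edges directly and taking the looser $n = \lceil k^2/2 \rceil$, after which only $\binom{n}{k} \ge m$ (true with enormous room, the ratio being $(e^2/2)^k/\mathrm{poly}(k)$) and $\ln(n!) < m/k!$ (true since $\ln(n!) = k^2\ln k - \Theta(k^2)$) need checking; both of your estimates are right. Your version is arithmetically cleaner; what the paper's version buys is that the witness is a full $k$-tournament on roughly $(k/e)^2$ vertices, which it uses later when discussing $n(k)$, the minimum number of vertices in a tournament with Property O. One cosmetic remark: the randomness in your choice of the $m$ subsets does no work --- any fixed collection of $m$ distinct $k$-subsets serves, since the union bound uses only the independent orientations --- so you could drop that layer of randomness (and with it the concentration alternative you mention) entirely.
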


The upper bound for $f(k)$ is proven in Section \ref{S:f(k)}.    The lower bound $k! \le f(k)$ follows from a standard 
argument.  Given any oriented $k$-graph  $\mathcal{H} = ([n], \mathcal{E})$, clearly each $\ol{E} \in \mathcal{E}$ is consistent
with 
$$(n - k)! \dbinom{n}{k} = \dfrac{n!}{k!}$$
orders on $[n]$.  Consequently, if $\mathcal{H}$ has Property O then 
$$|\mathcal{E}| \cdot\dfrac{n!}{k!} \ \ge \ n!, \ \text{so} \ |\mathcal{E}|  \ge k!.$$ 

We would like to decide if $f(k)$ is bounded away from $k!$, in analogy with Property~B.

 {\bf Problem 1.}  Determine whether  $\dfrac{f(k)}{k!} \to \infty$ as $k \to \infty$.

 We are unable to improve the simple lower bound for $f(k)$ at this point, however we can show that for any function $\alpha(k) \to 0$
 as $k \to \infty$, almost all $k$-tournaments with $$\binom{n}{k} = (1 - \alpha(k))(k^{1/2} k!)$$ edges fail to have Property O.     
 Let $\cT_{n,k}$ denote the set of all $k$-tournaments on $[n]$.  
 
   \begin{theorem}\label{T:random} 
     Let $0 < \alpha < 1$ and let $c = 2\pi/e^{1 + e^2/2}$.  If $n = (c \alpha (1 + o(1))^{1/k} \left( \frac{k}{e} \right)^2 k^{3/2k}$
     then for $k$ sufficiently large at least  $(1-\alpha)|\cT_{n,k}|$ members of $\cT_{n,k}$ do not have Property O. 
  \end{theorem}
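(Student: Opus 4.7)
We apply the second moment method to $Y := |\{<: \text{ no edge of }\mathcal{T}\text{ is consistent with }<\}|$, the number of ``bad'' orders for a uniformly random $\mathcal{T} \in \cT_{n,k}$. By Chebyshev's inequality, $P(Y = 0) \leq \mathrm{Var}(Y)/E[Y]^2$, so it suffices to show $\mathrm{Var}(Y) \leq \alpha E[Y]^2$, equivalently $E[Y^2]/E[Y]^2 \leq 1 + \alpha$, which yields $P(\mathcal{T}\text{ fails Property O}) \geq 1 - \alpha$ as required.

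For any linear order $<$, independence of the uniform $k!$-valued orientations across $k$-sets gives $P(<\text{ is bad}) = (1 - 1/k!)^B$ with $B := \binom{n}{k}$, so $E[Y] = n!(1 - 1/k!)^B$. For a pair of orders $<, <'$ with $a(<, <')$ denoting the number of $k$-subsets on which they induce the same ordering, $P(\text{both bad}) = (1-1/k!)^{a}(1-2/k!)^{B-a}$ (on agreeing subsets one orientation is forbidden, on disagreeing ones two are). Setting $p := 1/k!$ and $q := (1-p)/(1-2p)$, and using symmetry to reduce the double sum to one over permutations $\pi$ (with $<$ fixed as the identity), we obtain
\[
\frac{E[Y^2]}{E[Y]^2} \;=\; \Bigl(\frac{1-2p}{(1-p)^2}\Bigr)^{\!B} \cdot E_\pi\!\bigl[q^{a(\pi)}\bigr],
\]
where $a(\pi) = |\{S \in \binom{[n]}{k} : \pi|_S \text{ increasing}\}|$ counts increasing $k$-subsequences of a uniformly random $\pi \in S_n$.

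Applying Stirling $(k!)^2 \sim 2\pi k(k/e)^{2k}$ and $\binom{n}{k} \sim (n^k/k!)e^{-k^2/(2n)}$, the stated $n$ gives $k^2/n \to e^2$ and $Bp \sim c\alpha k^{1/2} e^{-e^2/2}/(2\pi) = \alpha k^{1/2}/e^{1+e^2}$. The first factor expands as $\bigl((1-2p)/(1-p)^2\bigr)^B = \exp(-Bp^2(1+o(1)))$, which tends to $1$ since $Bp^2 = Bp/k! \to 0$. Expanding $\log E_\pi[q^{a(\pi)}] = (\log q)E[a(\pi)] + (\log q)^2\sigma^2/2 + \cdots$ with $\sigma^2 := \mathrm{Var}(a(\pi))$, the leading term $(\log q)\cdot Bp \approx Bp^2$ cancels the first factor, leaving $p^2\sigma^2/2$ as the main correction.

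The key technical step is to estimate $\sigma^2 = \sum_{j \geq 1}\sum_{|S\cap T|=j}\mathrm{Cov}(X_S, X_T)$, where $X_S = \mathbf{1}[\pi|_S\text{ inc}]$; disjoint pairs contribute zero by independence. The number of pairs with overlap $j$ is $B\binom{k}{j}\binom{n-k}{k-j}$, and for the chosen $n$, the overlap $|S \cap T|$ between two random $k$-subsets is asymptotically $\mathrm{Poisson}(k^2/n) = \mathrm{Poisson}(e^2)$. Each covariance $\mathrm{Cov}(X_S, X_T)$ is obtained by counting linear extensions of the union poset (with $S$ and $T$ viewed as chains), a configuration-dependent quantity whose careful summation---using Stirling estimates on multinomial coefficients and the Poisson($e^2$) overlap distribution---yields a Gaussian-type variance factor $e^{e^2/2}$. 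The specific constant $c = 2\pi/e^{1+e^2/2}$ is calibrated so that $p^2\sigma^2/2 \leq \log(1+\alpha) + o(1)$, giving $E[Y^2]/E[Y]^2 \leq 1 + \alpha + o(1)$ and hence $P(Y = 0) \leq \alpha$ for $k$ sufficiently large. The main obstacle is precisely this variance estimate: $\mathrm{Cov}(X_S, X_T)$ depends on how the shared elements sit within each chain, so one must sum delicately over all configurations, not just over overlap sizes.
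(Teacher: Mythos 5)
Your strategy---a second--moment bound for the number $Y$ of linear orders with no consistent edge---is genuinely different from the paper's. The paper never computes a second moment: it exposes the random tournament in two rounds. First it reveals the set $C(T)$ of edges consistent with the natural order $<$; since $E|C(T)|=\binom{n}{k}/k!=:\omega=\Theta(\alpha\sqrt k)$, Markov's inequality gives $|C(T)|\le\frac{2}{\alpha}\omega<k-1$ with probability at least $1-\alpha/2$, and in that case one can pick a transversal $W$ meeting each $K$ with $\overline K\in C(T)$ in a non-minimal element and promote $W$ to the front, obtaining a \emph{single} order $<'$ consistent with no edge of $C(T)$. Then it reveals the remaining orientations and union-bounds over the few $k$-sets meeting $W$ to show that, with probability at least $1-\alpha/2$, none becomes consistent with $<'$. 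Your reduction to
\[
\frac{E[Y^2]}{E[Y]^2}=\Bigl(\frac{1-2p}{(1-p)^2}\Bigr)^{\!B}\,E_\pi\bigl[q^{a(\pi)}\bigr],\qquad p=\tfrac{1}{k!},\ q=\tfrac{1-p}{1-2p},
\]
is correct, as is the observation that the prefactor is $e^{-Bp^2(1+o(1))}=1-o(1)$.

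The gap is that the proposal stops exactly where the work begins, and the route you sketch for finishing is not sound. Truncating $\log E_\pi[q^{a(\pi)}]$ after the variance term is illegitimate here: $a(\pi)$ is about as far from Gaussian as possible. With $n\sim(k/e)^2$ the longest increasing subsequence of a random $\pi$ concentrates near $2\sqrt n\approx 2k/e<k$, so $a(\pi)=0$ on an event of probability $1-o(1)$, while $E[a(\pi)]=Bp=\Theta(\alpha\sqrt k)\to\infty$; the distribution is so heavy-tailed that all higher cumulants are comparable to $\sigma^2$, and nothing justifies keeping only $(\log q)^2\sigma^2/2$. Likewise, the assertions that the covariance sum ``yields a Gaussian-type variance factor $e^{e^2/2}$'' and that $c$ is ``calibrated so that $p^2\sigma^2/2\le\log(1+\alpha)$'' are unproved, and the latter is not the right picture: $\alpha$ enters only through the edge count, not through a variance cancellation. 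The approach can in fact be completed, and more easily than you fear: bound the moment generating function directly by splitting at $M=k!$. Convexity gives $q^a\le 1+\frac{q^M-1}{M}\,a$ for $a\le M$, contributing $1+O(\omega/k!)$, while Markov gives $\oP(a>M)\le E[a]/M=\omega/k!$, so the tail contributes at most $q^{B}\,\omega/k!=e^{O(\sqrt k)}\omega/k!\to0$. Hence $E_\pi[q^{a(\pi)}]=1+o(1)$ and $\oP(Y=0)=o(1)\le\alpha$, with no need for the overlap/linear-extension analysis you identify as the ``main obstacle.'' As written, however, that missing estimate is a genuine gap in the proof.
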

  
In the next section, we prove the upper bound of $f(k)$ given in Theorem \ref{T:ub}.  In Section \ref{S:random} we prove Theorem \ref{T:random}.  In Section \ref{S:prob} we provide a construction of $k$-graphs with Property O, investigate the situation for small values of  $n$ and $k$, and pose a few problems.  

We close this section with an observation used in both Sections  \ref{S:f(k)} and \ref{S:random}.\\

 {\bf Fact.} Let  
      \begin{equation}
     n = {\left( \frac{k}{e} \right)}^2 (1 + o(1)) \label{e:nbound} .\\
  \end{equation}  
 Then 
  \begin{equation}   
     \binom{n}{k} = \left(e^{-e^2/2}\right) \frac{n^k}{k!}(1 + o(1)) \label{e:nkbound} .\\
   \end{equation}

 Indeed, (\ref{e:nbound}) implies that\\[-.2cm]
  \begin{align*}
    & \frac{(n)_k}{n^k} \ = \ \left(1 - \frac{1}{n}\right)\left(1 - \frac{2}{n}\right) \cdots \left(1 - \frac{k - 1}{n}\right) \ = \
   \exp \left( \sum_{j = 1}^{k - 1} \ln(1 - j/n) \right) \ = \\
    &= \exp \left((1 + o(1))\sum_{j = 1}^{k - 1} -j/n \right) \ = \ \exp \left( (1 + o(1)) \left(-\binom{k}{2}/n \right) \right) \ = \\
    &= \exp\left((1 + o(1))\left(- e^2/2 \right)\right) \ = \ (1 + o(1)) e^{-e^2/2} ,\\[-.2cm]
  \end{align*}
 so  (\ref{e:nkbound}) holds.

\section{Proof of Theorem \ref{T:ub}}\label{S:f(k)}

We verify the upper bound in Theorem \ref{T:ub} by showing that for $k$ large enough, there exists a $k$-tournament with 
$(k^2 \ln k) k!$ edges which has Property O.  Indeed, we show that for an appropriate choice of $n$, a randomly selected 
member of $\cT_{n,k}$ has Property O with positive probability. 

Let $\mathcal{H} = ([n], \mathcal{E}) \in \mathcal{T}_{n, k}$. For a fixed  order $<$ on $[n]$ and a fixed  $\overline{E}\in \mathcal{E}$, the 
probability that $\ol E$ is not consistent with $<$ is $1-\frac{1}{k!}$.  Since the edges of $\mathcal{H}$ are oriented independently, 
the probability that no edge of $\mathcal{H}$ is consistent with $<$ is $(1-\frac{1}{k!})^{\binom{n}{k}}$. Taking the union bound over 
all orders on $V$, we see that the probability that there exists an order $<$ on $V$ so that no edge of $\mathcal{H}$ is consistent 
with $<$ is at most  $n!(1-\frac{1}{k!})^{\binom{n}{k}}$.
 
The upper bound follows once we verify (\ref{e:first}) and (\ref{e:second}), below, for $k$ sufficiently large.
 
Let $n = \left(\dfrac{k}{e}\right)^2\left( \pi \cdot \exp (e^2/2)\cdot k^3 \ln k \right)^{1/k}$. Then
   \begin{equation}\label{e:first}
     \binom{n}{k} \frac{1}{k!} \  \le \  k^2 \ln k, \ \text{and}\\ 
   \end{equation}
   \begin{equation}\label{e:second}  
     n!\left( 1 - \frac{1}{k!} \right)^{\scalebox{1.2}{$\binom{n}{k}$}} \ < \ 1.
   \end{equation}

To prove (\ref{e:first}), we apply the Fact and the Stirling approximation $k! = (k/e)^k \sqrt{2 \pi k}(1 + o(1))$:
  \begin{equation}\label{e:ub}
       \binom{n}{k} \frac{1}{k!} \  = \  e^{-e^2/2} \left( \frac{\left( {k}/{e} \right)^{2k} \pi \cdot e^{e^2/2} \cdot k^3 \ln k}{(k!)^2}\right) (1 + o(1)) \
       = \ \frac{1}{2} k^2 \ln k (1 + o(1)).
  \end{equation}
Hence (\ref{e:first}) holds for $k$ sufficiently large.

 Turning to inequality (\ref{e:second}),  we use the choice of $n$ and (\ref{e:ub}) to infer that
  \begin{equation}\label{e:nln}
    \begin{aligned}
     n \ln n \ &= \ 2 \left(\dfrac{k}{e} \right)^2 \ln k (1 + o(1)) \ < \  \binom{n}{k} \frac{1}{k!},
    \end{aligned}
  \end{equation}
for $k$ sufficiently large.   We have
   \begin{align*}
    n! \left(1 - \frac{1}{k!} \right)^{\scalebox{1.2}{$\binom{n}{k}$}} \ &\le  \ n^n \left(1 - \frac{1}{k!}\right)^{\scalebox{1.2}{$\binom{n}{k}$}}
    \\
                                                      &\le \  \exp(n \ln n) \cdot \exp\left(-\binom{n}{k}\frac{1}{k!}\right) \\
                                                      &=    \ \exp\left(n \ln n -\binom{n}{k}\frac{1}{k!}\right)  \\
                                                      & < \ 1 .                
  \end{align*}
  where the last inequality follows from (\ref{e:nln}).  This proves (\ref{e:second}) and completes the proof of the upper bound.
  
\section{Randomly Oriented Tournaments}\label{S:random}

In this section we prove Theorem \ref{T:random}.  For brevity, we set $\omega =  (\alpha/3e) k^{1/2}(1 + o(1))$.

\begin{proof}[Proof of Theorem \ref{T:random}]
Let $0 < \alpha < 1$.  We refine the Fact, that is, the estimate of the number of edges of the $k$-tournaments in $\cT_{n,k}$.
Since
   $$n = (c \alpha (1 + o(1))^{1/k} \left( \frac{k}{e} \right)^2 k^{3/2k} \ \text{where} \ c = \frac{2 \pi}{3 e} e^{e^2/2} ,$$
by Stirling's formula we have
  \begin{equation}\label{e:n^k1}
    \begin{aligned}      
      n^k \ &= \  c \alpha k^{3/2} \left(\frac{k}{e}\right)^{2k}(1 + o(1))\\
               &= \  \frac{e^{e^2/2}}{3e} \alpha (k!)^2 k^{1/2} (1 + o(1)) .
     \end{aligned}
   \end{equation}
On the other hand, by the Fact,
    \begin{equation}\label{e:n^k2}
       n^k \ = \ e^{e^2/2} (n)_k (1 + o(1)).
    \end{equation}
Comparing the right hand sides of (\ref{e:n^k1}) and (\ref{e:n^k2}), we obtain
\begin{equation}\label{e:omega}
  \binom{n}{k} \ = \ \frac{\alpha}{3e} k^{1/2} k! (1 + o(1)) \ = \ \omega k!
\end{equation}
with $\omega$ as defined above. 

 We will show that if $T$ is sampled from $\cT_{n,k}$, the set of all $k$-tournaments on $[n]$, according to the uniform 
 distribution, the probability that $T$ has Property O is at most $\alpha$. It will follow that at least $(1-\alpha)|\cT_{n,k}|$ members of $\cT_{n,k}$ fail to have Property O. 
 
The random sampling of $T = ([n], \mathcal{E})$ from $\cT_{n,k}$ is done in two phases.  
In the first phase we will select $k$-tuples which are consistent with the natural order $<$ on $[n]$ and in the
second phase we will assign to the remaining $k$-tuples one of the $k! - 1$ remaining orientations.

   {\bf Phase 1}: Reveal the set $C(T)$ of the members of $\mathcal{E}$ that are oriented consistently with  $<$.  

For any $\ol{K}\in \mathcal{E}$, $\oP(\ol{K} \in C(T)) = 1/k!$ and thus, by (\ref{e:omega}),
     $$E(|C(T)|) \ = \ \binom{n}{k} \frac{1}{k!} \ = \ \omega.$$     
Let $A_\omega$ be the event that $|C(T)| \leq \frac{2}{\alpha}\omega$. By Markov's inequality we have
     $$\oP\left(|C(T)| >\frac{2 }{\alpha}\omega\right) \ < \ \frac{\alpha}{2} \ \ \text{and so} \ \ \oP(A_\omega) > 
     1 - \frac{\alpha}{2}.$$

Assume that $A_\omega$ occurs.   For each $\ol{K} \in \mathcal{E}$, as before, let $\min K$ be the $<$-least element of $K$.  Define
     $$M = \{  \min K \ | \  \ol{K}\in C(T)\}$$
and note that    
     $$|M| \leq |C(T)|  \leq \frac{2}{\alpha}\omega\ < k-1.$$
Thus, for each $\ol{K} \in C(T)$, $K \setminus M \ne \emptyset$.  Let $W \subseteq [n]$ be obtained by selecting one 
element from each $K \setminus M$. We now define $<^\prime$ to be the natural order $<$ on each of $W$ and 
$[n] \setminus W$, and let $u <^\prime v$ for $u \in W$, $v \in [n]\setminus W$.

We claim that no $\ol{K} \in C(T)$ is consistent with $<^\prime$. To see this, let $v \in K \cap W$. On the one hand, 
$\min K \not \in W$ by the way that we selected   $W$. On the other hand, $v <^\prime \min K$ by the definition of $<^\prime$. 
However, $\ol{K} \in C(T)$ means precisely that $\ol{K}$ is consistent with $<$, and so $v <^\prime \min K$ is a 
contradiction. 

  {\bf Phase 2:}  Reveal the orientation on each  $\ol{K} \not \in C(T)$.

For each $\ol{K} \not \in C(T)$ there are $k! - 1$ possible orientations of $\ol{K}$ in $T$ --  any one 
except that given by the natural order $<$.   Since $T$ is chosen according to the uniform distribution, each
orientation is equally likely and at most one of these  is consistent with $<^\prime$.   Thus, 
  $$\oP(\ol{K} \ \text{is consistent with} \  <^\prime) \le 1/(k! - 1).$$
Also, if $K \cap W = \emptyset$, then $<$ and 
$<^{\prime}$ coincide on $K$, so $\ol{K}$ cannot be consistent with $<^{\prime}$.  Set 
$\op = \frac{2}{\alpha} \omega \geq |W|$.

Since the only $k$-tuples which may become consistent with $<^{\prime}$ are those which have a nonempty intersection
with $W$, in view of (\ref{e:omega}),
   $$\oP(\exists \ \ol{K}  \not \in C(T) \ \text{consistent with} \ <^{\prime} \ | \ A_{\omega})$$
is bounded above by
\begin{align}\notag
\left(\displaystyle{\sum_{j =1}^{\op}} \dbinom{\op}{j} \dbinom{n-\op}{k-j} \right) \frac{1}{k!-1} \ &=  \ \left(\displaystyle{\sum_{j =1}^{\op} }\dbinom{\op}{j} \dbinom{n-\op}{k-j}\right) \frac{\omega}{\dbinom{n}{k}}(1+o(1))\\ \notag
& = \ \omega \sum_{j =1}^{\op} \frac{(\op)_j(n-\op)_{k-j}k!}{j!(k-j)!(n)_k}(1+o(1))\\ \label{e:inequal1}
& \leq \ \omega  \sum_{j =1}^{\op}\frac{(k)_j(\op)_j}{j!(n)_j}(1 + o(1)).
\end{align}
The inequality in (\ref{e:inequal1}) holds because $j \leq \op$ and so $\dfrac{(n - \op)_{k-j}}{(n)_k} \le \dfrac{1}{(n)_j}.$

It is straightforward to argue that for all numbers $a, b, c$ satisfying $1 \le a, b < c$, 
$$\frac{(a - 1)(b - 1)}{c - 1} < \frac{ab}{c}.$$
Repeated application of this shows that the expression in (\ref{e:inequal1}) is bounded above by
  \begin{align*}
        & \omega \sum_{j =1}^{\op} \frac{1}{j!}\left(\frac{k \op}{n}\right)^j(1+o(1)) \ \le \ \omega \left(e^{k\op/n} - 1\right)(1+o(1))\\
        & \leq \ \omega \frac{k\op}{n}(1+o(1))  = \ \frac{2 k \omega^2}{\alpha n }(1 +o(1)) < \frac{\alpha}{2}.
  \end{align*}
The last inequality follows since $\omega =  (\alpha/3e) k^{1/2}$ and $(k/e)^2/2 \le n$.

If $T$ has Property O then either $A_\omega$ does not occur, or $A_\omega$ does occur and some $\ol{K} \not \in C(\tilde{T})$ is
consistent with $<^{\prime}$.  Consequently, we have:
\begin{align*}
\oP(T \text{ has Property O}) & \ \leq \ \oP(A_\omega^c) + \oP(A_\omega) \oP(\exists \ \ol{K} \not \in C(T) \ \text{consistent with}  <^{\prime}  | \ A_{\omega})\\
& \ < \ \frac{\alpha}{2} + \frac{\alpha}{2} = \alpha.
\end{align*}

Hence the probability that $T$ fails to have Property O is at least $(1-\alpha)$. Since $T$ is a uniform selection from $\cT_{n,k}$, this is equivalent to saying at least $(1-\alpha)|\cT_{n,k}|$ members of $\cT_{n,k}$ fail to have Property O. 
\end{proof}

\section{A Construction, Small Values of $n$, and Problems} \label{S:prob}

We have an upper bound for $f(k)$, the minimum number of edges in $k$-graphs with Property~O, in Theorem \ref{T:ub}: for $k \ge k_0$, $f(k) \le (k^2 \ln k)k!$.  We have a construction of $k$-graphs with Property O, for all $k \ge 2$.  While these $k$-graphs have edge sets
that are larger than the upper bound obtained by the probabilistic proof in Section \ref{S:f(k)}, the hypergraphs are not unreasonably large.

For each $k \ge 2$ we construct an oriented $k$-graph $\cG_k = (V_k, \cE_k)$ that has Property O, where 
  \begin{equation}\label{e:induction}
     |V_k| = 3^{k - 1} \  \text{and}  \ |\cE_k| = 2^{2(k - 2)} \cdot 3^{^{\binom{k - 1}{2} + 1}}.
  \end{equation}
To begin, let $\cG_2 = (V_2, \cE_2)$ be an oriented 3-cycle.   It is clear that $\cG_2$ has Property O and its vertex and edge sets have
the sizes given in (\ref{e:induction}).

Here is the induction hypothesis: $\cG_k = (V_k, \cE_k)$ is an oriented  $k$-graph with Property O and satisfies the conditions in  (\ref{e:induction}).   Let $X, Y$ and $Z$ be three disjoint copies of $V_k$ and let $\cG_X = (X, \cE_X), \cG_Y = (Y, \cE_Y)$ and $\cG_Z = (Z, \cE_Z)$
each be isomorphic to  $\cG_k$.   Define  $\cG_{k+1} = (V_{k+1}, \cE_{k+1})$ as follows.  (See Figure 1.)

  \begin{itemize}
  
        \item Let $V_{k+1}  = X \cup Y \cup Z$. \\[-.2cm] 

        \item Let $\cE_{k+1}$ be comprised for these four types of $(k+1)-$tuples:\\

           \begin{description}
               \item[$T_1$]$\!\!= \{ (x, y_1, y_2, \ldots, y_k) : \   x \in X \ \text{and} \  (y_1, y_2, \ldots, y_k) = \overline{y} \in  \cE_Y\}$;\\
               \item[$T_2$]$\!\!= \{ (\overline{z}, x) :  \ \overline{z} \in  \cE_{Z} \ \text{and} \  x \in X\}$;\\
               \item[$T_3$]$\!\!= \{ (\overline{y}, z) : \ \overline{y} \in  \cE_{Y} \ \text{and} \  z \in Z\}$;\\ 
               \item[$T_4$]$\!\!= \{ (\overline{x}, y) : \  \overline{x} \in \cE_{X} \ \text{and} \ y \in Y\}$.     
         \end{description} 
         
    \end{itemize}
         
   \begin{figure}\label{F:Gk}
       \scalebox{1.1}{\includegraphics{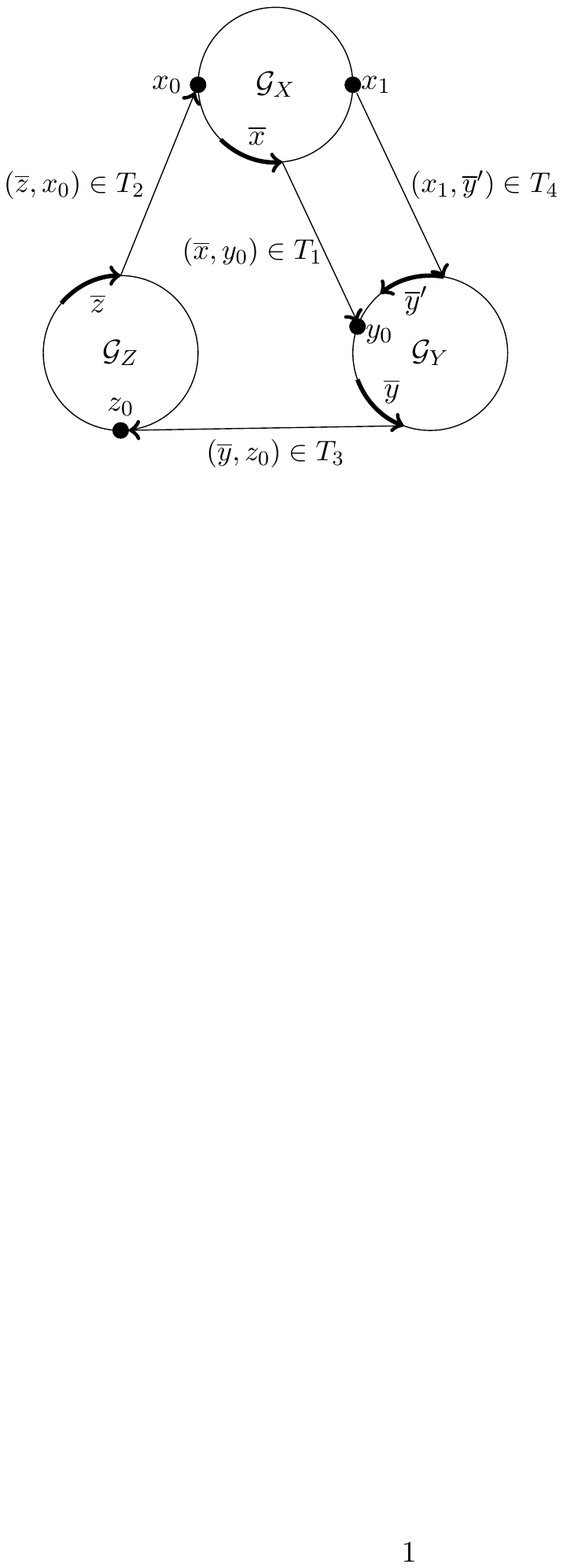}}    
       \caption{Constructing $\cG_{k+1}$ from $\cG_k$.}    
    \end{figure} 

\vspace{.5cm}
       
To see that $\cG_{k+1}$ has Property O, let $<$ be any linear order on $V_{k+1}$.  We find a member of $\cE_{k+1}$ consistent with 
$<$ as follows.

  \begin{enumerate}
  
     \item Suppose there is $x \in X$ such that $x < \min Y$.  Since  $\cG_Y$ has Property O there is some $\overline{y} \in \cE_Y$ 
     consistent with $<$.  Thus, $(x, \overline{y}) \in T_1$ is consistent with $<$.\\
     
     \item Suppose there is $x \in X$ such that $\max Z < x$.  Since  $\cG_Z$ has Property O there is some $\overline{z} \in \cE_Z$ 
     consistent with $<$.  Thus, $(\overline{z}, x) \in T_2$ is consistent with $<$.\   
     
   \end{enumerate}
   
 By {\bf(1)} and {\bf(2)}, we may assume that 
 $$\text{for all} \ x \in X \ \text{there exist} \ y_x \in Y \ \text{and} \ z_x \in Z \ \text{such that} \ y_x < x < z_x .$$
 Let $x_0 = \max X$.  Then $x \le x_0 < z_{x_0}$ for all $x \in X$.
 
  \begin{enumerate}\setcounter{enumi}{2}
  
      \item Suppose all $y \in Y$ satisfy $y < z_{x_0}$.  Since  $\cG_Y$ has Property O there is some $\overline{y} \in \cE_Y$ 
     consistent with $<$.  Then $(\overline{y}, z_{x_0}) \in T_3$ is consistent with $<$.\\
      
      \item Suppose some $y \in Y$ satisfies $z_{x_0} < y$.  Then for all $x \in X$, $x < y$.  Since $\cG_X$ has Property O there is some
      $\overline{x} \in \cE_X$ consistent with $<$.  Then $(\overline{x}, y) \in T_4$ is consistent with $<$.
   \end{enumerate}
 Therefore, $\cG_{k+1}$ has Property O. (Note that {\bf (1) - (4)} use all types of edges.)

Let us see that $\cG_{k+1} = (V_{k+1}, \cE_{k+1})$ satisfies the conditions in (\ref{e:induction}).   First, $|V_{k+1}| = 3 |V_{k}| = 3 \cdot 3^{k - 1} = 3^k$.  Second, \\[-.2cm]

\centerline{$|\cE_{k+1}| = 4 \cdot |\cE_k| \cdot |V_k| = 4 \cdot 2^{2(k - 2)} \cdot 3^{^{\binom{k - 1}{2} + 1}} \cdot 3^{k-1} = 2^{2k} \cdot 3^
{^{\binom{k}{2} + 1}}.$}

\vspace{.3cm}

Let $n(k)$ be the minimum number of vertices in a $k$-tournament with Property O.  We have already seen that for any oriented $k$-graph to have Property O, it must have at least $k!$ edges.  Since $\binom{n}{3} \ge 3!$ forces $n \ge 5$, we have $n(3) \ge 5$.  An exhaustive computer search shows that there are no $3$-tournaments on $5$ vertices with Property O.  However, the case where $n=6$ is already much more time consuming.  On the other hand, from the construction above, we have an oriented $3$-graph on $9$ vertices which has Property O.  
Thus $n(3) \le 9$.   It remains an open question as to whether there exists a $3$-tournament with Property O on $n = 6, 7$ or $8$ vertices.   So,
it is natural to pose this:\\[-.2cm]

{\bf Problem 2.}  Find the minimum number of vertices $n(3)$ in a $3$-tournament with Property~O. 

Returning to the function $f(k)$, we would like to determine $f(3)$, the minimum number of edges in an oriented 3-graph
with Property O.  It is easily seen that $f(3) > 6$.  For $k$ in general, it would be interesting to find a construction that improves the upper bound in Theorem~\ref{T:ub}.  Finally, we would like to settle Problem 1, that is, to determine whether $f(k)/k! \to \infty$.



\end{document}